 \newtheorem{thm}{Theorem}[section]
 \newtheorem{cor}[thm]{Corollary}
 \newtheorem{lem}[thm]{Lemma}
 \theoremstyle{definition}
 \newtheorem{defn}[thm]{Definition}
 \theoremstyle{remark}
 \newtheorem*{ex}{Example}
 \numberwithin{equation}{section}
\begin{document}

%-------------------------------------------------------------------------
% editorial commands: to be inserted by the editorial office
%
%\firstpage{1} \volume{228} \Copyrightyear{2004} \DOI{003-0001}
%
%
%\seriesextra{Just an add-on}
%\seriesextraline{This is the Concrete Title of this Book\br H.E. R and S.T.C. W, Eds.}
%
% for journals:
%
%\firstpage{1}
%\issuenumber{1}
%\Volumeandyear{1 (2004)}
%\Copyrightyear{2004}
%\DOI{003-xxxx-y}
%\Signet
%\commby{inhouse}
%\submitted{March 14, 2003}
%\received{March 16, 2000}
%\revised{June 1, 2000}
%\accepted{July 22, 2000}
%
%
%
%---------------------------------------------------------------------------
%Insert here the title, affiliations and abstract:
%

\title[Bounded composition operator on Lorentz spaces]
 {Bounded composition operator on Lorentz spaces}

%----------Author 1
\author{Nikita Evseev}

\address{%
Sobolev Institute of Mathematics\\
4 Acad. Koptyug avenue\\
630090 Novosibirsk\\
Russia}

\email{evseev@math.nsc.ru}

\thanks{This research was carried out at the Peoples' Friendship University of Russia
and financially supported by the Russian Science Foundation (Grant 16-41-02004)}
%----------classification, keywords, date
\subjclass{Primary 47B33; Secondary 46E30}

\keywords{Composition operators, Lorentz spaces, measurable transformations}

\date{January 31, 2017}
%----------additions
%\dedicatory{To my teacher Professor Sergey Vodopyanov}
%%% ----------------------------------------------------------------------

\begin{abstract}
We study a composition operator on Lorentz spaces.
In particular we provide necessary and sufficient conditions under which a measurable
mapping induces a bounded composition operator.
\end{abstract}

%%% ----------------------------------------------------------------------
\maketitle
%%% ----------------------------------------------------------------------
%\tableofcontents
\section{Introduction}
Lorentz spaces $L_{p,q}$ are a generalization of ordinary Lebesgue spaces $L_p$,
and they coincide with $L_p$ when $q=p$. 
Some references as to basics on Lorentz spaces may be found in \cite{SW, M, G}.
%Добавить про пространство Лоренца. 

A composition operator induced by map $\varphi$ on some function space is quite a natural object
which is defined as $C_\varphi f = f\circ\varphi$. 
Depending on the structure of particular function space various properties of a compositions operator 
are under interest e.g. boundedness, compactness, inevitability and so on.  
The study of composition operators may be divided into three directions. 

The first one could be referred to as classical and goes back to 
Littlewood’s Subordination Principle (1925).
This principle states that a holomorphic  self-mapping of the unit disk $D\subset C$
preserving $0$ induces a contractive composition operator on Hardy space $H^p(D)$,
as well on Bergman and Dirichlet spaces.	 
However, it is believed that the systematic study of composition operators
induced by holomorphic maps started with the paper \cite{N} by E. A. Nordgren in the mid 1960's. 
%By the present time
Afterwards the study of composition operator developed 
at the juncture of analytic function theory and operator theory.
We refer the reader to book \cite{S} by J. Shapiro.

The second direction has a more operator flavor.    
Researchers raised all the questions about composition operators which could be 
posed regarding operators on normed spaces. 
% 
%whether the operator is bounded, compact, Fredholm, etc  in terms of properties of the inducing  map.
One may find an exhaustive survey on the topic in the book \cite{SM} by R. K. Singh, J. S. Manhas 
and also in the proceedings~\cite{SonCO}.

The survey on composition operators on Sobolev spaces was motivated by
the question, \textit{what change of variables does preserve a Sobolev class?} 
Therefore the research was primarily focused on analytic and geometric properties
of mappings, whereas operator theory was involved to a lesser extent.   	
The first results in this area are due to S. L. Sobolev (1941), 
V. G. Maz'ya (1961), F. W. Gehring (1971).
Subsequently S.~K.~Vodopjanov and V.~M.~Gold{\v{s}}te{\u\i}n (1975-76)
studied a lattice isomorphism on Sobolev spaces.
Later on many more mathematicians contributed to this research,
see details in \cite{V2005, V2012} and recent results 
on the subject in \cite{VN2015, V2016}. 
We also mention here recent paper \cite{HKM} on composition operator on Sobolev-Lorentz space.
 
%By this time, By the present time 
Our work belongs to the second of the described directions. 
As of right now composition operators on $L_p$ have been  
investigated thoroughly enough (see \cite{CJ, SM, VU}). 
In the case of Lorentz spaces most of the research has been concerned with 
composition operators from $L_{p,q}$ to $L_{p,q}$, 
domain and image spaces having the same parameters
(see \cite{KK, ADV, KADA}). 
Here we initiate the study of a composition operator from $L_{r,s}$ to $L_{p,q}$,
where the parameters may differ.
The principal result of the paper is as follows. 
\begin{thm}\label{theorem:principal}
A measurable mapping $\varphi:X\to Y$ satisfying $\mathcal N^{-1}$-property 
induces a bounded composition operator 
$$
C_\varphi:L_{r,s}(Y)\to L_{p,q}(X), \qquad s\leq q
$$
if and only if 
$$
\int\limits_BJ_{\varphi^{-1}}(y)\, d\nu(y) \leq K^p\big(\nu(B)\big)^{\frac{p}{r}}
$$
for some constant $K<\infty$ and any measurable set $B$.
\end{thm}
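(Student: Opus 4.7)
The plan is to combine the layer-cake representation of the Lorentz quasinorm with a change-of-variables formula induced by $\varphi$. For the necessity direction I would test the operator on characteristic functions; for the sufficiency direction I would estimate the distribution function of $f\circ\varphi$ directly and then use a standard embedding to absorb the gap between $s$ and $q$.

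\textbf{Necessity.} The $\mathcal N^{-1}$-property forces $\mu\circ\varphi^{-1}\ll\nu$, so $J_{\varphi^{-1}}=d(\mu\circ\varphi^{-1})/d\nu$ is well-defined. Plugging $f=\chi_B$ into the boundedness inequality $\|C_\varphi f\|_{L_{p,q}(X)}\le M\|f\|_{L_{r,s}(Y)}$, and using that the decreasing rearrangement $\chi_E^{*}=\chi_{[0,\mathrm{meas}(E)]}$ delivers $\|\chi_E\|_{L_{p,q}}=c_{p,q}\,\mu(E)^{1/p}$ for an explicit constant, this reduces to $\mu(\varphi^{-1}(B))^{1/p}\le C\,\nu(B)^{1/r}$. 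Rewriting $\mu(\varphi^{-1}(B))=\int_B J_{\varphi^{-1}}\,d\nu$ and taking $p$-th powers yields the claimed integral inequality with $K$ proportional to $M$.

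\textbf{Sufficiency.} I would start from the standard identity $\|g\|_{L_{p,q}}^{q}\asymp \int_0^\infty \lambda^{q-1}\mu_g(\lambda)^{q/p}\,d\lambda$. Setting $g=f\circ\varphi$ and writing $E_\lambda=\{|f|>\lambda\}$, the identity $\{|g|>\lambda\}=\varphi^{-1}(E_\lambda)$ together with the change-of-variables formula gives $\mu_g(\lambda)=\int_{E_\lambda} J_{\varphi^{-1}}\,d\nu$, and the hypothesis converts this into $\mu_g(\lambda)\le K^p\,\nu_f(\lambda)^{p/r}$. Substituting back into the distributional formula produces $\|f\circ\varphi\|_{L_{p,q}(X)}\le K\,\|f\|_{L_{r,q}(Y)}$. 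The assumption $s\le q$ enters only at the final step, through the classical embedding $L_{r,s}(Y)\hookrightarrow L_{r,q}(Y)$.

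\textbf{Main obstacle.} I expect the technical heart to be the rigorous justification of the change-of-variables identity $\int_X h\circ\varphi\,d\mu=\int_Y h\,J_{\varphi^{-1}}\,d\nu$ for general nonnegative measurable $h$, starting only from the $\mathcal N^{-1}$-property (rather than from smoothness or a proper Jacobian). Once this pullback formula is in place, both directions are essentially distributional layer-cake calculations, and tracking the multiplicative constants and invoking the $L_{r,s}\hookrightarrow L_{r,q}$ embedding at the end are routine.
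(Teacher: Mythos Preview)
Your proposal is correct and matches the paper's argument essentially line for line: necessity by testing on indicators, sufficiency by the distributional formula for the Lorentz norm followed by the embedding $L_{r,s}\hookrightarrow L_{r,q}$. The ``main obstacle'' you flag is not one here, since only the case $h=\chi_B$ of the pullback identity is ever used, and that is exactly the defining property of the Radon--Nikodym derivative $J_{\varphi^{-1}}=d(\mu\circ\varphi^{-1})/d\nu$ guaranteed by the $\mathcal N^{-1}$-property.
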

We prove the theorem above in section \ref{comp}, 
while the range of composition operator and the case when composition operator is an isomorphism are studied in sections \ref{image} and \ref{iso}.

\section{Lorentz spaces}

Let $(X,\mathcal A, \mu)$ be a $\sigma$-finite measurable space.
The Lorentz space $L_{p,q}(X,\mathcal A, \mu)$  is the set of all measurable functions $f:X\to\mathbb{C}$
for which
$$
\|f\|_{p,q} = \Bigg(\frac{q}{p}\int\limits_{0}^{\infty}\big(t^{\frac{1}{p}}f^*(t)\big)^q\,\frac{dt}{t}\Bigg)^{\frac{1}{q}} < \infty, \quad \text{ if } 1<p<\infty, 1\leq q<\infty,
$$
or
$$
\|f\|_{p,\infty} = \sup\limits_{t>0}t^{\frac{1}{p}}f^*(t)<\infty, \quad \text{ if } 1<p<\infty, q=\infty.
$$
The \textit{non-increasing rearrangement} $f^*(t)$ of a function $f(x)$ is defined as
$$
f^*(t) = \inf\{\lambda>0 : \mu_{f}(\lambda)\leq t\},
$$  
where
$$
\mu_{f}(\lambda) = \mu\{x\in X : |f(x)|>\lambda\}
$$
is the \textit{distribution function} of $f(x)$.

Note that $\|\cdot\|_{p,q}$ is a norm if $1<q\leq p$ and a quasi-norm if $p<q$.
We will refer to $\|\cdot\|_{p,q}$ as the Lorentz norm. 
For brevity we will use $L_{p,q}(X)$ instead of $L_{p,q}(X,\mathcal A, \mu)$.

In what follows we will need the next properties of Lorentz spaces.
\begin{lem}[{\cite[Proposition 2.1.]{KKM}}] % 3.6. Proposition in lorentz.pdf by Jan Maly.
The Lorentz norm can be computed via distribution:
\begin{equation}\label{eq:norm_distr}
\|f\|_{p,q} = \Bigg(\frac{q}{p}\int\limits_{0}^{\infty}\big(t^{\frac{1}{p}}f^*(t)\big)^q\,\frac{dt}{t}\Bigg)^{\frac{1}{q}} 
= \Bigg(q\int\limits_{0}^{\infty}\left(\lambda\mu_{f}^{\frac{1}{p}}(\lambda)\right)^q\,\frac{d\lambda}{\lambda}\Bigg)^{\frac{1}{q}}
\end{equation}
and
\begin{equation}\label{eq:norm_distr_infty}
\|f\|_{p,\infty} = \sup\limits_{\lambda>0}\lambda\big(\mu_{f}(\lambda)\big)^{\frac{1}{p}}.
\end{equation}
\end{lem}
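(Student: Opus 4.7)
The plan is to exploit the \emph{distribution--rearrangement duality}
$$
f^*(t) > \lambda \iff \mu_f(\lambda) > t,
$$
which holds outside a planar set of Lebesgue measure zero in the $(t,\lambda)$-plane. I would verify this first by unwinding the definition $f^*(t) = \inf\{s > 0 : \mu_f(s) \leq t\}$ together with monotonicity and right-continuity of $\mu_f$; the exceptional locus consists of pairs $(t,\lambda)$ with $\mu_f(\lambda)=t$ or $f^*(t)=\lambda$, which is negligible and hence does not affect any integration below.

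For the first identity in \eqref{eq:norm_distr}, I would apply a layer-cake expansion followed by Tonelli's theorem. Writing $(f^*(t))^q = q\int_0^{f^*(t)} \lambda^{q-1}\, d\lambda$, the left-hand quantity becomes a double integral over $\{(t,\lambda): 0 < \lambda < f^*(t)\}$ with integrand $\tfrac{q^2}{p}\, t^{q/p-1}\lambda^{q-1}$. The duality transforms this region into $\{(t,\lambda): 0 < t < \mu_f(\lambda)\}$; swapping the order of integration and evaluating the inner integral $\int_0^{\mu_f(\lambda)} t^{q/p-1}\, dt = \tfrac{p}{q}\mu_f(\lambda)^{q/p}$ produces $q\int_0^\infty \lambda^{q-1}\mu_f(\lambda)^{q/p}\, d\lambda$, which after the outer $1/q$ power is exactly the desired right-hand side.

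For the identity \eqref{eq:norm_distr_infty}, I would use the duality directly. Picking $\lambda_0 = f^*(t)$ gives $\mu_f(\lambda_0) \leq t$, so $\lambda_0\mu_f(\lambda_0)^{1/p} \leq t^{1/p} f^*(t)$; conversely, setting $t_0 = \mu_f(\lambda)$ yields $f^*(t_0) \geq \lambda$, so $t_0^{1/p} f^*(t_0) \geq \lambda \mu_f(\lambda)^{1/p}$. Taking suprema in both directions produces equality.

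The main obstacle I anticipate is purely bookkeeping: pinning down the strict-versus-nonstrict inequalities in the duality at the points where $\mu_f$ has jumps or plateaus, and handling the possibility that $\mu_f$ takes the value $+\infty$ on an initial segment. Since the exceptional locus in $(t,\lambda)$-space is a planar null set, it vanishes inside any double integral and causes no harm; Tonelli's theorem applies trivially thanks to non-negativity of the integrand. The whole argument thus reduces to a careful statement of the duality together with a clean Fubini calculation.
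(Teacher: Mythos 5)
The paper gives no proof of this lemma at all --- it simply cites \cite[Proposition 2.1]{KKM} --- so your proposal has to stand on its own merits. For the case $q<\infty$ it does: the duality $f^*(t)>\lambda \iff \mu_f(\lambda)>t$, the layer-cake expansion $(f^*(t))^q = q\int_0^{f^*(t)}\lambda^{q-1}\,d\lambda$, and Tonelli's theorem yield exactly $q\int_0^\infty \lambda^{q-1}\mu_f(\lambda)^{q/p}\,d\lambda$, with the constants working out. One remark: the duality actually holds for \emph{every} pair $(t,\lambda)$, not merely off a planar null set; right-continuity of $\mu_f$ gives $\mu_f(\lambda)>t \Rightarrow f^*(t)>\lambda$, while $\mu_f(\lambda)\le t \Rightarrow f^*(t)\le\lambda$ is immediate from the definition of the infimum. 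You will need this everywhere-version in the $q=\infty$ case, where suprema are not forgiving of exceptional sets.

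Your argument for \eqref{eq:norm_distr_infty}, however, has a genuine gap. The claim that $t_0=\mu_f(\lambda)$ yields $f^*(t_0)\ge\lambda$ is false: one always has $f^*(\mu_f(\lambda))\le\lambda$, because $\lambda$ itself belongs to the set $\{s>0:\mu_f(s)\le \mu_f(\lambda)\}$ whose infimum defines $f^*(\mu_f(\lambda))$. Concretely, take $f=\chi_E$ with $0<\mu(E)<\infty$ and $\lambda=\tfrac12$: then $t_0=\mu_f(\tfrac12)=\mu(E)$, $f^*(t_0)=0$, and your asserted inequality $t_0^{1/p}f^*(t_0)\ge \lambda\,\mu_f(\lambda)^{1/p}$ reads $0\ge\tfrac12\,\mu(E)^{1/p}$, which fails. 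There is also a structural problem: both of your substitutions produce inequalities of the same shape, namely a term $\lambda\mu_f(\lambda)^{1/p}$ bounded above by a term $t^{1/p}f^*(t)$ (the first one, with $\lambda_0=f^*(t)$, is correct), so even if both were valid they would only give $\sup_\lambda \lambda\mu_f(\lambda)^{1/p}\le\sup_t t^{1/p}f^*(t)$, leaving the reverse inequality unproved. The repair is to run the exact duality with strict inequalities and approximate: for fixed $\lambda$ with $\mu_f(\lambda)>0$ and any $t<\mu_f(\lambda)$, duality gives $f^*(t)>\lambda$, hence $\sup_s s^{1/p}f^*(s)\ge t^{1/p}\lambda$, and letting $t\uparrow\mu_f(\lambda)$ yields $\sup_s s^{1/p}f^*(s)\ge \lambda\,\mu_f(\lambda)^{1/p}$; symmetrically, for fixed $t$ with $f^*(t)>0$ and any $\lambda<f^*(t)$, duality gives $\mu_f(\lambda)>t$, hence $\sup_\sigma \sigma\,\mu_f(\sigma)^{1/p}\ge \lambda t^{1/p}$, and letting $\lambda\uparrow f^*(t)$ gives the reverse bound. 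With that replacement the proof is complete.
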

\begin{lem}[{\cite[equation (2.10)]{SW}}]
Let $E\subset X$ be a measurable set. 
The Lorentz norm of its indicator is 
\begin{equation}\label{eq:lemma_indicator_norm}
\|\chi_E\|_{p,q} = (\mu(E))^{\frac{1}{p}}. 
\end{equation}
\end{lem}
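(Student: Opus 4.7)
The plan is to reduce everything to a direct computation from the definitions, since the statement is purely about a single explicit function $\chi_E$ whose distribution function can be read off by inspection. First I would compute the distribution function $\mu_{\chi_E}$. Since $|\chi_E(x)|\in\{0,1\}$, for $0\le\lambda<1$ we have $\{|\chi_E|>\lambda\}=E$, giving $\mu_{\chi_E}(\lambda)=\mu(E)$, while for $\lambda\ge1$ the set is empty, giving $\mu_{\chi_E}(\lambda)=0$.

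Next I would determine the non-increasing rearrangement from its definition $\chi_E^*(t)=\inf\{\lambda>0:\mu_{\chi_E}(\lambda)\le t\}$. For $t<\mu(E)$ the only $\lambda$ satisfying the inequality are those $\ge 1$, so $\chi_E^*(t)=1$; for $t\ge\mu(E)$ every $\lambda>0$ works, so $\chi_E^*(t)=0$. In short, $\chi_E^*=\chi_{[0,\mu(E))}$.

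Plugging this into the defining formula for $\|\cdot\|_{p,q}$ in the case $1\le q<\infty$ gives
\begin{equation*}
\|\chi_E\|_{p,q}=\Bigl(\tfrac{q}{p}\int_{0}^{\mu(E)}t^{q/p}\,\tfrac{dt}{t}\Bigr)^{1/q}
=\Bigl(\tfrac{q}{p}\cdot\tfrac{p}{q}\,\mu(E)^{q/p}\Bigr)^{1/q}=\mu(E)^{1/p}.
\end{equation*}
For the case $q=\infty$, the formula \eqref{eq:norm_distr_infty} together with the computation of $\mu_{\chi_E}$ yields $\|\chi_E\|_{p,\infty}=\sup_{0<\lambda<1}\lambda\,\mu(E)^{1/p}=\mu(E)^{1/p}$, completing both cases.

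There is no real obstacle here; the only care needed is handling the endpoint $\lambda=1$ (or $t=\mu(E)$) correctly when passing between the distribution function and its generalized inverse, and treating the degenerate case $\mu(E)=0$ or $\mu(E)=\infty$ trivially (the norm is $0$ or $\infty$ respectively, matching $\mu(E)^{1/p}$). The argument is essentially a one-line verification once the rearrangement is identified.
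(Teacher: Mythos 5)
Your proof is correct, and it differs from the paper's in one small but genuine respect: the intermediate object you compute. The paper never touches the rearrangement $\chi_E^*$; for $q<\infty$ it plugs the distribution $\mu_{\chi_E}(\lambda)=\mu(E)\cdot\chi_{(0,1)}(\lambda)$ directly into the distribution-based formula \eqref{eq:norm_distr} (the identity borrowed from Proposition 2.1 of \cite{KKM}) and integrates $\lambda^{q-1}$ over $(0,1)$. You instead work from the original definition of the Lorentz norm: you pass from the distribution to the rearrangement $\chi_E^*=\chi_{[0,\mu(E))}$ and integrate $t^{q/p-1}$ over $(0,\mu(E))$. Both are one-line computations once the relevant function is identified; what your route buys is independence from the cited identity \eqref{eq:norm_distr} (you only need the definitions), at the modest cost of the inverse-function bookkeeping needed to get $\chi_E^*$ right, which you handle correctly, including the degenerate cases $\mu(E)\in\{0,\infty\}$ that the paper passes over silently. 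For $q=\infty$ your argument and the paper's coincide (both use \eqref{eq:norm_distr_infty}), though with $\chi_E^*$ in hand you could equally well have read the supremum off the original definition $\sup_{t>0}t^{1/p}\chi_E^*(t)$.
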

\begin{proof}
Observe that $\mu_{\chi_E}(\lambda) = \mu(E)\cdot\chi_{(0,1)}(\lambda)$. If $q<\infty$ we apply formula \eqref{eq:norm_distr}:
$$
\|\chi_E\|_{p,q} = \Bigg(q\int\limits_{0}^{1}\big(\lambda\mu(E)^{\frac{1}{p}}\big)^q\,\frac{d\lambda}{\lambda}\Bigg)^{\frac{1}{q}} = (\mu(E))^{\frac{1}{p}}.
$$ 

If $q=\infty$, we infer from \eqref{eq:norm_distr_infty}  that 
$$\|\chi_E\|_{p,\infty} = \sup\limits_{0<t<1}t\cdot (\mu(E))^{\frac{1}{p}} = (\mu(E))^{\frac{1}{p}}.$$
\end{proof}	

\begin{thm}[{\cite[Theorem 3.11]{SW}}]
Suppose that $f\in L_{p,q_1}$ and $q_1\leq q_2$, then $\|f\|_{p,q_2}\leq \|f\|_{p,q_1}$. 
\end{thm}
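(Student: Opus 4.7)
The plan is to exploit the monotonicity of the non-increasing rearrangement to obtain a pointwise estimate on the quantity $g(t) := t^{1/p}f^{*}(t)$, and then perform an exact integration that compares the $q_2$- and $q_1$-norms without producing extra constants. No Hardy-type inequality is needed; the computation should come out with constant $1$ because of the precise way the factors $q_j/p$ in the norm definition cancel against an elementary antiderivative.

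\textbf{Step 1 (pointwise estimate).} Since $f^{*}$ is non-increasing, for every $s \leq t$ one has $f^{*}(s) \geq f^{*}(t)$, hence $g(s)^{q_1} \geq s^{q_1/p}\,f^{*}(t)^{q_1}$. Integrating against $ds/s$ on $(0,t)$ and using $\int_0^t s^{q_1/p - 1}\,ds = (p/q_1)\,t^{q_1/p}$ gives
\[
\frac{q_1}{p}\int_0^t g(s)^{q_1}\,\frac{ds}{s} \;\geq\; g(t)^{q_1}.
\]
Letting $t \to \infty$ and taking a $q_1$-th root already disposes of the endpoint $q_2 = \infty$: it yields $\|f\|_{p,\infty} \leq \|f\|_{p,q_1}$.

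\textbf{Step 2 (intermediate case).} For $q_1 \leq q_2 < \infty$ I would split
\[
g(t)^{q_2} \;=\; g(t)^{q_2 - q_1}\cdot g(t)^{q_1}
\]
and apply the pointwise estimate of Step 1 (raised to the power $(q_2 - q_1)/q_1$) to the first factor. Setting $\Phi(t) := \int_0^t g(s)^{q_1}\,ds/s$, so that $\Phi'(t) = g(t)^{q_1}/t$, the integral defining $\|f\|_{p,q_2}^{q_2}$ becomes, up to prefactors, the elementary primitive $\int_0^\infty \Phi(t)^{(q_2 - q_1)/q_1}\,\Phi'(t)\,dt = (q_1/q_2)\,\Phi(\infty)^{q_2/q_1}$. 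Bookkeeping of the factors $q_2/p$, $(q_1/p)^{(q_2-q_1)/q_1}$, and the integration factor $q_1/q_2$ shows they combine exactly into $[(q_1/p)\,\Phi(\infty)]^{q_2/q_1} = \|f\|_{p,q_1}^{q_2}$, which is the desired bound.

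\textbf{Main obstacle.} The subtle point is that the cruder consequence of Step 1, namely $g(t) \leq \|f\|_{p,q_1}$ for all $t$, is too lossy: inserted into the $q_2$-integral it only yields the weaker inequality $\|f\|_{p,q_2} \leq (q_2/q_1)^{1/q_2}\|f\|_{p,q_1}$. To reach the stated constant-free bound one must retain the \emph{full} pointwise control by the partial integral $\frac{q_1}{p}\int_0^t g^{q_1}\,ds/s$ and then recognize that what follows is not an interpolation but an exact change of variables $u = \Phi(t)$. Getting this cancellation right — and verifying that the prefactors $q_j/p$ in the paper's normalization line up so that the constant on the right is exactly $1$ — is the only delicate part of the argument.
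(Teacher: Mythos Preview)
The paper does not supply a proof of this statement; it merely quotes it as \cite[Theorem~3.11]{SW}. So there is nothing in the paper to compare your argument against directly.

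That said, your outline is correct and is in fact precisely the classical Stein--Weiss proof. The key identity is that from $g(t)^{q_1}\le \frac{q_1}{p}\Phi(t)$ (your Step~1) one gets
\[
\frac{q_2}{p}\int_0^\infty g(t)^{q_2}\,\frac{dt}{t}
\;\le\;\frac{q_2}{p}\Bigl(\frac{q_1}{p}\Bigr)^{(q_2-q_1)/q_1}\int_0^\infty \Phi(t)^{(q_2-q_1)/q_1}\Phi'(t)\,dt
=\Bigl(\tfrac{q_1}{p}\Phi(\infty)\Bigr)^{q_2/q_1},
\]
and the exponents indeed collapse to give constant~$1$, exactly as you claim. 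Your remark about the ``main obstacle'' is also on point: bounding $g(t)$ by the constant $\|f\|_{p,q_1}$ instead of by the partial integral would only produce $(q_2/q_1)^{1/q_2}\|f\|_{p,q_1}$, so retaining the $t$-dependent bound and recognizing the exact primitive is what makes the sharp inequality work.
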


%%%%%%%%%%%%%%%%%%%%%%%%%%%%%% Composition operator
\section{Composition operator}\label{comp}
Let $(X,\mathcal A, \mu)$ and $(Y,\mathcal B, \nu)$ be  $\sigma$-finite measurable spaces and 
$\varphi:X\to Y$ be a measurable mapping.

\begin{lem}\label{lemma:principal}
Let $s\leq q$ and $f\circ\varphi \in L_{p,q}(X)$ for all $f\in L_{r,s}(Y)$,
then the following two statements are equivalent 

1. $\|f\circ\varphi\|_{p,q}\leq K\|f\|_{r,s}$ for any $f\in L_{r,s}(Y)$;

2. $(\mu(\varphi^{-1}(B)))^{\frac{1}{p}} \leq K(\nu(B))^{\frac{1}{r}}$ for any set $B\in\mathcal B$.
\end{lem}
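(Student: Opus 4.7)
The plan is to prove the two implications separately. The direction $(1)\Rightarrow(2)$ will come from testing the operator bound on indicators, while $(2)\Rightarrow(1)$ will come from unwinding the Lorentz norm through its distribution-function representation.

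For $(1)\Rightarrow(2)$, I would set $f=\chi_B$ for an arbitrary $B\in\mathcal B$. Since $\chi_B\circ\varphi=\chi_{\varphi^{-1}(B)}$, the indicator-norm identity \eqref{eq:lemma_indicator_norm} applied on both sides of the assumed operator bound immediately produces $(\mu(\varphi^{-1}(B)))^{1/p}\leq K(\nu(B))^{1/r}$.

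For $(2)\Rightarrow(1)$, the key observation is that the distribution function transforms cleanly under composition:
$$
\mu_{f\circ\varphi}(\lambda)=\mu\bigl(\varphi^{-1}\{y\in Y:|f(y)|>\lambda\}\bigr).
$$
Taking $B=\{|f|>\lambda\}$ in hypothesis (2) yields the pointwise bound $\mu_{f\circ\varphi}(\lambda)^{1/p}\leq K\,\nu_f(\lambda)^{1/r}$ for every $\lambda>0$. Plugging this into the distribution-function formula \eqref{eq:norm_distr} for the Lorentz norm and factoring the constant out of the integral gives $\|f\circ\varphi\|_{p,q}\leq K\|f\|_{r,q}$. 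The hypothesis $s\leq q$ enters only at this final step, via the nesting theorem cited just before the lemma, which supplies $\|f\|_{r,q}\leq\|f\|_{r,s}$ and closes the estimate. The case $q=\infty$ is handled identically using \eqref{eq:norm_distr_infty} in place of \eqref{eq:norm_distr}, with the supremum treated in the obvious way.

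The main obstacle I anticipate is conceptual rather than technical: it is tempting to try to argue directly with the non-increasing rearrangement $(f\circ\varphi)^*$, which does not enjoy a clean pointwise relation to $f^*$ when $\varphi$ is not injective. Switching to the distribution function, which does satisfy the clean pullback identity displayed above, reduces the substance of the implication $(2)\Rightarrow(1)$ to a single substitution under an integral, with the role of the assumption $s\leq q$ isolated in the very last line.
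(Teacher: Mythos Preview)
Your proof is correct and follows essentially the same approach as the paper: test on indicators for $(1)\Rightarrow(2)$, and for $(2)\Rightarrow(1)$ pass to distribution functions, apply the pointwise bound on the superlevel sets $\{|f|>\lambda\}$, and close with the nesting $\|f\|_{r,q}\leq\|f\|_{r,s}$. The only detail the paper makes explicit that you leave implicit is the case $\nu(B)=\infty$ in the first implication (where $\chi_B\notin L_{r,s}(Y)$ so you cannot test on it, but the inequality is trivial since the right-hand side is infinite).
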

\begin{proof}
Let $B\in\mathcal B$ and $\nu(B)<\infty$.
Plugging the indicator function $\chi_B(y)$ 
into statement 1  
and using property \eqref{eq:lemma_indicator_norm}, 
we obtain 2. 
If $\nu(B)=\infty$ the claim is trivial.

Suppose now that statement 2 holds. 
Let $f\in L_{r,s}(Y)$. 
First we find the expression for the distribution of the composition $f\circ\varphi$:
$$
\mu_{f\circ\varphi}(\lambda) = \mu(\{x\in X : |f(\varphi(x))|>\lambda\})
=\mu(\varphi^{-1}(\{y\in Y : |f(y)|>\lambda\})).
$$
Denote $E_\lambda = \{y\in Y : |f(y)|>\lambda\}$, then $\nu_f(\lambda) = \nu(E_\lambda)$
and $\mu_{f\circ\varphi}(\lambda) = \mu(\varphi^{-1}(E_\lambda))$.
From the inequality of statement 2 deduce
$$
\big(\mu(\varphi^{-1}(E_\lambda))\big)^{\frac{1}{p}} \leq K\big(\nu(E_\lambda)\big)^{\frac{1}{r}} 
$$
and thus
$$
\big(\mu_{f\circ\varphi}(\lambda)\big)^{\frac{1}{p}} \leq K\big(\nu_f(\lambda)\big)^{\frac{1}{r}}.
$$
Consequently,
\begin{multline*}
\|f\circ\varphi\|_{p,q}
= \Bigg(q\int\limits_{0}^{\infty}\left(\lambda\big(\mu_{f\circ\varphi}(\lambda)\big)^{\frac{1}{p}}\right)^q
\,\frac{d\lambda}{\lambda}\Bigg)^{\frac{1}{q}}\\
\leq
\Bigg(q\int\limits_{0}^{\infty}\left(\lambda K\big(\nu_{f}(\lambda)\big)^{\frac{1}{r}}\right)^q
\,\frac{d\lambda}{\lambda}\Bigg)^{\frac{1}{q}}
=K\|f\|_{r,q} \leq K\|f\|_{r,s}
\end{multline*}
if $s<\infty$, and
$$
\|f\circ\varphi\|_{p,\infty} 
= \sup\limits_{\lambda>0}\lambda\big(\mu_{f\circ\varphi}(\lambda)\big)^{\frac{1}{p}}
\leq K\sup\limits_{\lambda>0}\lambda\big(\nu_{f}(\lambda)\big)^{\frac{1}{r}}
= K\|f\|_{r,\infty}
$$
as desired. 
\end{proof}
Assuming $p=r, q=s$ obtain \cite[Theorem 1]{KK} and \cite[Theorem 2.1]{ADV} as consequences
of lemma \ref{lemma:principal}.

\begin{defn} 
A mapping $\varphi$ induces a \textit{composition operator} on Lorentz spaces
\begin{equation}\label{composition_operator}
C_\varphi:L_{r,s}(Y)\to L_{p,q}(X) \quad \text{ by the rule } C_\varphi f = f\circ\varphi
\end{equation}
whenever $f\circ\varphi\in L_{p,q}(X)$. 
\end{defn}
Clearly that $C_\varphi$ is a linear operator between two vector spaces.

A composition operator $C_\varphi$ is bounded if
\begin{equation}\label{eq:bounded_operator}
\|C_\varphi f\|_{p,q}\leq K\|f\|_{r,s}
\end{equation}
for every function $f\in L_{r,s}(Y)$, the constant $K$ being independent of the choice of~$f$.

Similarly, $C_\varphi$ is bounded below if
\begin{equation}\label{bounded-below}
\|C_\varphi f\|_{p,q} \geq k\|f\|_{r,s}.
\end{equation}

\begin{cor}\label{cor:LuzinN-1}
If a measurable mapping $\varphi$ induces a bounded composition operator, 
then $\varphi$ enjoys Luzin $\mathcal N^{-1}$-property 
(which means that $\mu(\varphi^{-1}(S))=0$ whenever $\nu(S)=0$).
\end{cor}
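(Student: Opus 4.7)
The proof reduces immediately to Lemma \ref{lemma:principal}. By assumption $C_\varphi$ is bounded, i.e.\ inequality \eqref{eq:bounded_operator} holds for every $f\in L_{r,s}(Y)$; in particular $f\circ\varphi\in L_{p,q}(X)$ for every such $f$, so the hypothesis of Lemma \ref{lemma:principal} is satisfied. Statement~1 of that lemma is exactly the boundedness estimate, hence its statement~2 applies:
\begin{equation*}
\bigl(\mu(\varphi^{-1}(B))\bigr)^{\frac{1}{p}} \leq K\bigl(\nu(B)\bigr)^{\frac{1}{r}} \quad \text{for every } B\in\mathcal B.
\end{equation*}

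The plan is then to take $B=S$ with $\nu(S)=0$ and read off $\mu(\varphi^{-1}(S))=0$. No obstacle arises: the inequality above forces $\mu(\varphi^{-1}(S))^{1/p}\le K\cdot 0=0$, so $\mu(\varphi^{-1}(S))=0$, which is precisely the $\mathcal N^{-1}$-property.

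Alternatively, and equivalently, one can bypass the full lemma and argue directly with indicator functions: plug $f=\chi_S$ into \eqref{eq:bounded_operator}, note that $\chi_S\circ\varphi=\chi_{\varphi^{-1}(S)}$, and apply the indicator norm formula \eqref{eq:lemma_indicator_norm} on both sides to obtain $(\mu(\varphi^{-1}(S)))^{1/p}\le K(\nu(S))^{1/r}=0$. Either presentation yields a proof of just a couple of lines, so there is no substantive technical difficulty to overcome beyond citing the lemma already proved.
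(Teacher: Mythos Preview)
Your argument is correct and matches the paper's intent: the corollary is stated without proof, immediately after Lemma~\ref{lemma:principal}, as an obvious consequence of the implication $1\Rightarrow 2$ there (i.e.\ the indicator-function test), which is precisely what you spell out. One minor caveat: Lemma~\ref{lemma:principal} is stated under the standing hypothesis $s\leq q$, so if the corollary is meant to cover all parameter choices, your alternative direct argument with $f=\chi_S$ and formula~\eqref{eq:lemma_indicator_norm} is the cleaner citation, since that direction of the lemma's proof does not use $s\leq q$.
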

In particular, corollary \ref{cor:LuzinN-1} guarantees that if functions $f_1, f_2$ 
coincide a.e. on $Y$ then the images $C_\varphi f_1(x)$, $C_\varphi f_2(x)$
coincide a.e. on $X$.
On the other hand the a priori assumption of $\mathcal N^{-1}$-property enables us to 
consider \eqref{composition_operator} as an operator on equivalence classes.

Suppose we are given a measurable mapping $\varphi:X\to Y$
satisfying Luzin $\mathcal N^{-1}$-property. 
Then the measure $\mu\circ\varphi^{-1}$ is absolutely continuous with respect to $\nu$.
Thus the Radon--Nikodym theorem guarantees the existence of a measurable function 
$J_{\varphi^{-1}}(y)$ (the Radon--Nikodym derivative) such that
\begin{equation}\label{eq:rd1}
\mu(\varphi^{-1}(E)) = \int\limits_E J_{\varphi^{-1}}(y)\, d\nu(y).
\end{equation} 
%The above can be viewed as 
%\begin{equation}\label{eq:rd2}
%\int\limits_X\chi_{\varphi^{-1}(E)}(x)d\mu(x) = \int\limits_Y\chi_{E}(y) J_{\varphi^{-1}}(y)\, d\nu(y).
%\end{equation}

On account of \eqref{eq:rd1}, theorem \ref{theorem:principal} follows 
immediately from lemma  \ref{lemma:principal}.

\begin{ex}
Let $X$ and $Y$ be subsets of $R^n$ with Lebesgue measure ${|\cdot|}$.
Consider a mapping  $\varphi:X\to Y$ such that the Jacobian is bounded $J(x,\varphi)<M<\infty$
and the Banach indicatrix\footnote{$N(y,f,X) = \#\{x\in X \mid f(x) = y\}$ is the number of elements of $f^{-1}(y)$ in $X$.}
is bounded $N(y,\varphi, X) < N$ as well.  
Therefore 
$$
\frac{N}{M} < J_{\varphi^{-1}}(y).
$$
Suppose that $\varphi$ induces a bounded operator from $L_{r,s}(Y)$ to $L_{p,q}(X)$
then by theorem~\ref{theorem:principal} 
and by the inequality above we obtain 
$$
0<\frac{N}{M}|B| < \int\limits_BJ_{\varphi^{-1}}(y)\, dy \leq K^p|B|^{\frac{p}{r}}
$$
and 
$$
\frac{N}{MK^p} <  |B|^{\frac{p}{r} - 1}.
$$
If we take a sequence of sets $B_k$ such that $|B_k|\to 0$ 
we will derive the necessary condition $p\leq r$, which is usually taken for granted.     
\end{ex}

\begin{ex}
Now let $X,Y\subset\mathbb R^2$. 
Examine a mapping $\varphi:X\to Y$ such that $\varphi(x_1,x_2) = (\frac{n}{2}, \frac{m}{2})$,
where $n-1<x_1<n$, $m-1<x_2<m$, $n,m\in \mathbb Z$. 
Let $\mu$ be the Lebesgue measure on $\mathbb R^2$ while $\nu$ be a discrete measure with atoms in 
$(\frac{n}{2}, \frac{m}{2})$, $n,m\in \mathbb Z$ and for the sake of simplicity
we set $\nu((\frac{n}{2}, \frac{m}{2})) = 1$. 
Then $J_{\varphi^{-1}}(y)=1$.
In this case the mapping $\varphi$ could induce a bounded composition operator
from $L_{r,s}(Y, \mathcal B, \nu)$ to $L_{p,q}(X, \mathcal A, \mu)$, even if $r<p$.
\end{ex}

%%%%%%%%%%%%%%%%%%%%%%%%%%%%%%%%%%%%%%%%%%%%%IMAGE
\section{Properties of the image}\label{image}
In this section we exploit ideas from \cite{ADV} to investigate the  range of a composition operator.  
First we show that $J_{\varphi^{-1}}(y)$ may be assumed to be positive a.e. on $Y$.
Let $$Z=\{y\in Y : J_{\varphi^{-1}}(y)=0\},$$ then 
$$\mu(\varphi^{-1}(Z)) = \int\limits_Z J_{\varphi^{-1}}(y)\, d\nu(y) = 0.$$
Thus, after redefining the map  $\varphi$ on the set $\mu(\varphi^{-1}(Z))$ of measure zero
we obtain the property $J_{\varphi^{-1}}(y)>0$ a.e on $Y$.

\begin{thm}\label{theorem:bounded_ below}
A measurable mapping $\varphi$ satisfying $\mathcal N^{-1}$-property 
induces a bounded below composition operator 
$$
C_\varphi:L_{r,s}(Y)\to L_{p,q}(X), \quad s\geq q
$$
if and only if
\begin{equation}\label{eq:rd-below}
\int\limits_BJ_{\varphi^{-1}}(y)\, d\nu(y) \geq k^p\big(\nu(B)\big)^{\frac{p}{r}}
\end{equation}
for any $B\in \mathcal B$.
\end{thm}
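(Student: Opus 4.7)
The plan is to mirror the structure of Lemma 3.1, with all inequalities reversed and the role of the parameter $s$ exchanged via the Lorentz inclusion theorem stated earlier. As in the bounded case, the necessity is tested on indicators and sufficiency is extracted through the layer-cake formula for the Lorentz norm.

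For necessity, I would plug the characteristic function $\chi_B$ (for $B\in\mathcal B$ with $\nu(B)<\infty$) into \eqref{bounded-below}. Since $C_\varphi\chi_B=\chi_{\varphi^{-1}(B)}$, the indicator-norm formula \eqref{eq:lemma_indicator_norm} yields
$$
\bigl(\mu(\varphi^{-1}(B))\bigr)^{1/p}\ \geq\ k\bigl(\nu(B)\bigr)^{1/r}.
$$
Raising both sides to the $p$-th power and substituting the Radon--Nikodym identity \eqref{eq:rd1} gives \eqref{eq:rd-below}. The case $\nu(B)=\infty$ is trivial.

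For sufficiency, fix $f\in L_{r,s}(Y)$ and repeat the distribution computation from Lemma 3.1: with $E_\lambda=\{y\in Y:|f(y)|>\lambda\}$, one has $\mu_{f\circ\varphi}(\lambda)=\mu(\varphi^{-1}(E_\lambda))$. Applying \eqref{eq:rd-below} to $B=E_\lambda$ and \eqref{eq:rd1} gives the pointwise estimate
$$
\bigl(\mu_{f\circ\varphi}(\lambda)\bigr)^{1/p}\ \geq\ k\bigl(\nu_f(\lambda)\bigr)^{1/r},\qquad \lambda>0.
$$
Raising to the $q$-th power, multiplying by $\lambda^{q-1}$ and integrating (or, if $q=\infty$, taking the supremum) in $\lambda$, and invoking formula \eqref{eq:norm_distr} (resp.\ \eqref{eq:norm_distr_infty}) on both sides, I obtain
$$
\|f\circ\varphi\|_{p,q}\ \geq\ k\,\|f\|_{r,q}.
$$

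The last step is to bridge the gap between $\|f\|_{r,q}$ and $\|f\|_{r,s}$. Since the standing hypothesis is $s\geq q$, Theorem~2.3 (the embedding $L_{r,q}\hookrightarrow L_{r,s}$, i.e.\ $\|f\|_{r,s}\leq\|f\|_{r,q}$) gives $k\|f\|_{r,q}\geq k\|f\|_{r,s}$, finishing the bound \eqref{bounded-below}. The only subtle point, and the one I expect will need the most care in a careful write-up, is tracking this direction-of-inclusion step: the hypothesis $s\geq q$ here plays exactly the opposite role of the hypothesis $s\leq q$ in Lemma~3.1, because for a lower bound one must estimate $\|f\|_{r,s}$ from above by $\|f\|_{r,q}$ rather than the reverse.
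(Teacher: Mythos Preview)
Your proposal is correct and follows essentially the same approach as the paper's proof: test necessity on indicators via \eqref{eq:lemma_indicator_norm} and \eqref{eq:rd1}, and for sufficiency pass from the pointwise distribution inequality $\bigl(\mu_{f\circ\varphi}(\lambda)\bigr)^{1/p}\geq k\bigl(\nu_f(\lambda)\bigr)^{1/r}$ to $\|C_\varphi f\|_{p,q}\geq k\|f\|_{r,q}\geq k\|f\|_{r,s}$ using the Lorentz inclusion with $s\geq q$. Your explicit remark on why the direction of the inclusion flips relative to Lemma~\ref{lemma:principal} is exactly the point.
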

\begin{proof}
Applying \eqref{bounded-below} to the indicator function $\chi_B(y)$ and using \eqref{eq:lemma_indicator_norm},
\eqref{eq:rd1} we derive
$$
\bigg(\int\limits_BJ_{\varphi^{-1}}(y)\, d\nu(y)\bigg)^{\frac{1}{p}} \geq k\big(\nu(B)\big)^{\frac{1}{r}}.
$$ 

Suppose now \eqref{eq:rd-below} holds.
Then in view of \eqref{eq:rd1}
$$
\mu_{f\circ\varphi}(\lambda) = \int\limits_Y\chi_{E_\lambda}(y) J_{\varphi^{-1}}(y)\, d\nu(y)
\geq k^p\big(\nu(E_\lambda)\big)^{\frac{p}{r}} = k^p\big(\nu_f(\lambda)\big)^{\frac{p}{r}}.
$$
Thus $\big(\mu_{f\circ\varphi}(\lambda)\big)^{\frac{1}{p}}\geq k\big(\nu_f(\lambda)\big)^{\frac{1}{r}}$ and
$$
\|C_\varphi f\|_{p,q} \geq k\|f\|_{r,q} \geq k\|f\|_{r,s}.
$$
\end{proof}

Let $s=q$. Making use of the well known fact from functional analysis,
which says that a linear bounded operator between Banach spaces 
is bounded below if and only if it is one-to-one and has closed range, 
we arrive to the following assertion.
\begin{thm}\label{theorem:closed_image}
A bounded composition operator $C_\varphi:L_{r,s}(Y)\to L_{p,s}(X)$ is injective and has the closed image
if and only if there is a constant $k>0$ such that
$$
\int\limits_BJ_{\varphi^{-1}}(y)\, d\nu(y) \geq k^p\big(\nu(B)\big)^{\frac{p}{r}}
$$
for any $B\in \mathcal B$. 
\end{thm}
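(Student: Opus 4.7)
The plan is to reduce the statement to Theorem~\ref{theorem:bounded_ below} via the functional-analytic criterion recalled in the paragraph preceding the theorem: for bounded linear operators between Banach spaces, being bounded below is equivalent to being injective and having closed range.

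With the choice $s=q$, the hypothesis $s\leq q$ needed for the boundedness criterion in Lemma~\ref{lemma:principal} and the hypothesis $s\geq q$ needed for Theorem~\ref{theorem:bounded_ below} both hold trivially, so both characterizations are simultaneously available for the operator $C_\varphi:L_{r,s}(Y)\to L_{p,s}(X)$.

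I would carry out the two directions symmetrically. For the forward implication, assume $C_\varphi$ is injective and has closed image; then the functional-analytic criterion produces a constant $k>0$ with $\|C_\varphi f\|_{p,s}\geq k\|f\|_{r,s}$ for every $f\in L_{r,s}(Y)$, and Theorem~\ref{theorem:bounded_ below} converts this into the desired lower bound
$$\int\limits_B J_{\varphi^{-1}}(y)\,d\nu(y)\geq k^p\big(\nu(B)\big)^{\frac{p}{r}}.$$
For the converse, the integral estimate is fed into Theorem~\ref{theorem:bounded_ below} to obtain the bounded-below inequality, and the Banach-space criterion then supplies injectivity together with closedness of the range.

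The main delicate point I anticipate is justifying the use of the Banach-space version of the bounded-below criterion: the introductory section warns that $\|\cdot\|_{p,q}$ is only a genuine norm when $q\leq p$, and merely a quasi-norm otherwise. When $s=q\leq\min(r,p)$ both $L_{r,s}(Y)$ and $L_{p,s}(X)$ are honest Banach spaces and the argument is immediate; outside this range one should either appeal to the quasi-Banach version of the bounded-below/closed-range equivalence (which holds by the same proof) or replace $\|\cdot\|_{r,s}$ and $\|\cdot\|_{p,s}$ by the equivalent norms available on Lorentz spaces with $p,r>1$.
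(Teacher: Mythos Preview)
Your proposal is correct and matches the paper's approach exactly: the paper simply invokes the functional-analytic equivalence between ``bounded below'' and ``injective with closed range'' and cites Theorem~\ref{theorem:bounded_ below} (with $s=q$) to translate this into the integral inequality. Your additional remark about the quasi-norm issue is a legitimate caveat that the paper itself passes over in silence.
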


Next we discuss where a bounded composition operator has dense image.
\begin{thm}\label{theorem:dense_image}
The image of a bounded composition operator 
$C_\varphi:L_{r,s}(Y, \mathcal B, \nu)\to L_{p,q}(X, \mathcal A, \mu)$ 
is dense in  $L_{p,q}(X, \varphi^{-1}(\mathcal B), \mu)$.
\end{thm}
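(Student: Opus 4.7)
I would approximate an arbitrary $g\in L_{p,q}(X,\varphi^{-1}(\mathcal B),\mu)$ in two stages: first by simple $\varphi^{-1}(\mathcal B)$-measurable functions with support of finite $\mu$-measure, and then write each such indicator as an $L_{p,q}$-limit of functions in the image of $C_\varphi$. For $1<p<\infty$ and $1\leq q<\infty$ the density of simple functions of finite support in any Lorentz space is standard, so the whole statement reduces to showing that for each $B\in\mathcal B$ with $\mu(\varphi^{-1}(B))<\infty$ the indicator $\chi_{\varphi^{-1}(B)}$ lies in the closure of $C_\varphi(L_{r,s}(Y))$.

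\textbf{The key construction.} The naive choice $f=\chi_B$ fails because $\nu(B)$ may be infinite even when $\mu(\varphi^{-1}(B))$ is finite. Building on the normalisation at the start of the section, namely $J_{\varphi^{-1}}>0$ $\nu$-a.e.\ on $Y$, I would introduce the truncations
$$
B_n = B\cap\{y\in Y : J_{\varphi^{-1}}(y)\geq 1/n\},\qquad n\in\mathbb N.
$$
Then $\nu(B_n)\leq n\int_{B_n}J_{\varphi^{-1}}\,d\nu\leq n\,\mu(\varphi^{-1}(B))<\infty$, so $\chi_{B_n}\in L_{r,s}(Y)$ and $C_\varphi\chi_{B_n}=\chi_{\varphi^{-1}(B_n)}$. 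Combining \eqref{eq:lemma_indicator_norm} and \eqref{eq:rd1} yields
$$
\|\chi_{\varphi^{-1}(B)} - C_\varphi\chi_{B_n}\|_{p,q}
=\bigl(\mu(\varphi^{-1}(B\setminus B_n))\bigr)^{1/p}
=\Bigl(\int_{B\cap\{J_{\varphi^{-1}}<1/n\}}J_{\varphi^{-1}}\,d\nu\Bigr)^{1/p}.
$$
Since $J_{\varphi^{-1}}$ is $\nu$-integrable on $B$ and the sets $B\cap\{J_{\varphi^{-1}}<1/n\}$ decrease to the $\nu$-null set $B\cap\{J_{\varphi^{-1}}=0\}$, dominated convergence drives this norm to $0$.

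\textbf{Main obstacle.} The only delicate point is the passage from an arbitrary $B\in\mathcal B$ to a set of finite $\nu$-measure; the rest is routine. The truncation by $\{J_{\varphi^{-1}}\geq 1/n\}$ replaces a possibly $\nu$-infinite set by a $\nu$-finite one at a cost that vanishes in the pushed-forward measure $\mu\circ\varphi^{-1}$, and this is precisely where the $\nu$-a.e.\ positivity of $J_{\varphi^{-1}}$ (hence the $\mathcal N^{-1}$-property) is decisive. Once the indicator case is in place, linearity of $C_\varphi$ and the triangle inequality extend the approximation to all simple functions, and the density of simple functions in $L_{p,q}$ closes the argument.
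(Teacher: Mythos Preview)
Your argument is correct and follows the same overall architecture as the paper: reduce to simple $\varphi^{-1}(\mathcal B)$-measurable functions, then to indicators $\chi_{\varphi^{-1}(B)}$, and approximate each of these by $C_\varphi\chi_{B_n}$ for an increasing family $B_n\subset B$ with $\nu(B_n)<\infty$, concluding by dominated convergence.

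The only genuine difference is the choice of the exhausting family. The paper simply invokes the $\sigma$-finiteness of $(Y,\mathcal B,\nu)$ to write $B=\bigcup B_k$ with $\nu(B_k)<\infty$, and then argues that $f_k=C_\varphi\chi_{B_k}\to\chi_A$ in $L_{p,q}$ via dominated convergence on the distribution functions. You instead truncate by the level sets $B_n=B\cap\{J_{\varphi^{-1}}\geq 1/n\}$, which lets you read off the explicit error
\[
\|\chi_{\varphi^{-1}(B)}-C_\varphi\chi_{B_n}\|_{p,q}
=\Bigl(\int_{B\cap\{J_{\varphi^{-1}}<1/n\}}J_{\varphi^{-1}}\,d\nu\Bigr)^{1/p}\to 0.
\]
Your route is a touch more quantitative and makes the role of the push-forward measure transparent; the paper's route is shorter and uses only the ambient $\sigma$-finiteness. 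One small remark: the $\nu$-a.e.\ positivity of $J_{\varphi^{-1}}$ is not actually ``decisive'' in your last step, since the integrand $J_{\varphi^{-1}}\chi_{\{J_{\varphi^{-1}}<1/n\}}$ tends to $0$ everywhere regardless (it vanishes on $\{J_{\varphi^{-1}}=0\}$ for free); what is genuinely used is the integrability $\int_B J_{\varphi^{-1}}\,d\nu=\mu(\varphi^{-1}(B))<\infty$.
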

\begin{proof}
Let $\chi_A\in L_{p,q}(X, \varphi^{-1}(\mathcal B), \mu)$ be the indicator function of a set $A=\varphi^{-1}(B)$,
$B\in\mathcal B$.
It is easy to see that  $\chi_A(x) = \chi_B(\varphi(x))$, though we cannot ensure  
$\chi_B(y)\in L_{r,s}(Y, \mathcal B, \nu)$.

Let $B = \bigcup B_k$, where $\{B_k\}$ is an increasing sequence of sets of finite measure.   
Then $\chi_{B_k}(y)\in L_{r,s}(Y, \mathcal B, \nu)$. Denote $f_k=C_\varphi\chi_{B_k}$.
Obviously $f_k(x)\leq \chi_A(x)$ and $f_k(x)\to \chi_A(x)$ as $k\to \infty$ a.e. on $X$.
The similar inequality and convergence take place for distributions ($\mu_{f_k}$ and $\mu_{\chi_A}$),
therefore from the Lebesgue theorem $f_k(x)\to \chi_A(x)$ in $L_{p,q}(X)$. 
The same arguments work for simple functions. 

It follows that every simple function from  $L_{p,q}(X, \varphi^{-1}(\mathcal B), \mu)$
is the limit of images.
Since the set of simple functions is dense in $L_{p,q}(X)$ we conclude that the image 
$C_\varphi(L_{r,s}(Y, \mathcal B, \nu))$ is dense in $L_{p,q}(X, \varphi^{-1}(\mathcal B), \mu)$. 
\end{proof}

%%%%%%%%%%%%%%%%%%%%%%%%%%%%%%%%ISOMORPHISM
\section{Isomorphism}\label{iso}
We will say  that a mapping $\varphi:X\to Y$ induces an isomorphism 
of Lorentz spaces $L_{p,q}(Y, \mathcal B, \nu)$, $L_{p,q}(X, \mathcal A, \mu)$
whenever the composition operator $C_\varphi$ is bijective and the inequalities 
\begin{equation}\label{bounded_below_above}
k\|f\|_{p,q} \leq \|C_\varphi f\|_{p,q}\leq K\|f\|_{p,q}
\end{equation}
hold for every function $f\in L_{p,q}(Y, \mathcal B, \nu)$
and for some constants $0<k\leq K<\infty$ independent of the choice of $f$. 

\begin{thm}
A measurable mapping satisfying $\mathcal N^{-1}$-property 
induces an isomorphism of Lorentz spaces
$$
C_\varphi:L_{p,q}(Y, \mathcal B, \nu)\to L_{p,q}(X, \mathcal A, \mu)
$$
if and only if 
\begin{equation}\label{rd_below_above}
k^p \leq J_{\varphi^{-1}}(y) \leq K^p \quad \text{ a.e. } y\in Y
\end{equation}
and 
$\varphi^{-1}(\mathcal B) = \mathcal A$.
\end{thm}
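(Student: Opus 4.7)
The plan is to split into sufficiency and necessity, exploiting Theorems \ref{theorem:principal}, \ref{theorem:bounded_ below}, and \ref{theorem:dense_image} as black boxes with the parameter choice $r=p$, $s=q$.

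For the sufficiency direction, assume $k^p\leq J_{\varphi^{-1}}(y)\leq K^p$ a.e.\ and $\varphi^{-1}(\mathcal B)=\mathcal A$. Integrating the two pointwise bounds over an arbitrary $B\in\mathcal B$ I obtain
$$
k^p\nu(B)\;\leq\;\int_B J_{\varphi^{-1}}(y)\,d\nu(y)\;\leq\;K^p\nu(B),
$$
so Theorem \ref{theorem:principal} yields $\|C_\varphi f\|_{p,q}\leq K\|f\|_{p,q}$ and Theorem \ref{theorem:bounded_ below} yields $\|C_\varphi f\|_{p,q}\geq k\|f\|_{p,q}$, giving the two-sided estimate in \eqref{bounded_below_above}. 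The lower bound makes $C_\varphi$ injective with closed range, and Theorem \ref{theorem:dense_image} says this range is dense in $L_{p,q}(X,\varphi^{-1}(\mathcal B),\mu)=L_{p,q}(X,\mathcal A,\mu)$; combined with closedness this forces surjectivity, hence $C_\varphi$ is an isomorphism.

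For the necessity direction, assume $C_\varphi$ is an isomorphism and apply the two-sided inequality. The upper inequality is precisely the hypothesis of Theorem \ref{theorem:principal} with $r=p$, $s=q$, giving $\int_B J_{\varphi^{-1}}\,d\nu\leq K^p\nu(B)$, and the lower inequality similarly gives $\int_B J_{\varphi^{-1}}\,d\nu\geq k^p\nu(B)$ from Theorem \ref{theorem:bounded_ below}. A standard differentiation argument (take $B=\{J_{\varphi^{-1}}>K^p+\varepsilon\}$ restricted to a set of finite measure and let $\varepsilon\downarrow0$, and similarly on the other side) converts these integral bounds into the pointwise bounds \eqref{rd_below_above} a.e.\ on $Y$.

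It remains to establish $\varphi^{-1}(\mathcal B)=\mathcal A$. The inclusion $\varphi^{-1}(\mathcal B)\subset\mathcal A$ is automatic from measurability of $\varphi$. For the reverse inclusion I exploit surjectivity: given $A\in\mathcal A$ of finite measure, $\chi_A\in L_{p,q}(X,\mathcal A,\mu)$, so there exists $f\in L_{p,q}(Y,\mathcal B,\nu)$ with $f\circ\varphi=\chi_A$ a.e. Since $\chi_A$ takes only the values $0$ and $1$, the set $B=\{y\in Y:f(y)=1\}$ lies in $\mathcal B$ and $\varphi^{-1}(B)$ differs from $A$ only by a $\mu$-null set; by the $\mathcal N^{-1}$-property this null set itself is of the form $\varphi^{-1}$ of a $\nu$-null set (up to completion), so $A\in\varphi^{-1}(\mathcal B)$. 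Sigma-finiteness reduces the general case to the finite-measure case by exhaustion. I expect the subtlest step to be this last identification $\varphi^{-1}(\mathcal B)=\mathcal A$, where one must carefully handle the completion of the sigma-algebras and verify that the representative $f$ produced by surjectivity can indeed be taken to be an indicator function; the two-sided pointwise estimate on $J_{\varphi^{-1}}$ itself is then a routine consequence of Lebesgue differentiation applied to the integral inequalities.
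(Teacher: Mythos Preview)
Your approach is essentially the paper's: both directions use Theorems \ref{theorem:principal} and \ref{theorem:bounded_ below} with $r=p$, $s=q$ for the two-sided estimate, and sufficiency combines closed range with Theorem \ref{theorem:dense_image} to get surjectivity. The paper treats the passage from the integral inequalities to the pointwise bounds on $J_{\varphi^{-1}}$ as ``straightforward,'' so your explicit level-set argument only adds detail.

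One correction on the point you rightly flag as subtle: the $\mathcal N^{-1}$-property runs the wrong way to conclude that a given $\mu$-null set is the preimage of a $\nu$-null set. What actually helps here is the already-established lower bound $J_{\varphi^{-1}}\geq k^p>0$, which makes $\mu\circ\varphi^{-1}$ and $\nu$ mutually absolutely continuous; this forces $f$ to be $\{0,1\}$-valued $\nu$-a.e., so that $f=\chi_B$ and $\chi_A=\chi_{\varphi^{-1}(B)}$ $\mu$-a.e. The paper does not discuss this null-set issue at all---it simply asserts $f=\chi_B$ and $A=\varphi^{-1}(B)$ outright---so the residual question of whether $A$ literally lies in $\varphi^{-1}(\mathcal B)$ (as opposed to agreeing with some $\varphi^{-1}(B)$ up to a $\mu$-null set) is left implicit in both your argument and the paper's.
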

\begin{proof}
Let $\varphi$ induce an isomorphism.
Thanks to theorems \ref{theorem:principal}, \ref{theorem:bounded_ below} 
inequalities \eqref{rd_below_above} are a straightforward consequence of \eqref{bounded_below_above}.

Show that $\varphi^{-1}(\mathcal B) = \mathcal A$. Let $A\in\mathcal A$ and $\mu(A)<\infty$, 
then the indicator function $\chi_A\in L_{p,q}(X, \mathcal A, \mu)$.
Because of the surjectivity there is a function $f\in L_{p,q}(Y, \mathcal B, \nu)$ such that
$\chi_A = C_\varphi f$. 
Observe that the set $B=\{y\in Y : f(y) = 1\}$ is an element of $\mathcal B$ and $f = \chi_B$. 
This yields $\chi_A = C_\varphi \chi_B = \chi_{\varphi^{-1}(B)}$ and hence $A=\varphi^{-1}(B)$.
Thus $\mathcal A = \varphi^{-1}(\mathcal B)$.

Now assume that $\mathcal A = \varphi^{-1}(\mathcal B)$ and \eqref{rd_below_above} holds.
Again \eqref{bounded_below_above} is equivalent to \eqref{rd_below_above} 
owing to theorems \ref{theorem:principal}, \ref{theorem:bounded_ below}. 
From theorem \ref{theorem:closed_image} we infer that the operator $C_\varphi$ is one-to-one and 
the image $C_\varphi(L_{p,q}(Y, \mathcal B, \nu))$ is closed, 
whereas theorem \ref{theorem:dense_image} implies the density of the image in 
$L_{p,q}(X, \mathcal A, \mu)$. 
Consequently $C_\varphi(L_{p,q}(Y, \mathcal B, \nu))=L_{p,q}(X, \mathcal A, \mu)$.   
  
\end{proof}

% ------------------------------------------------------------------------
\end{document}